\newcommand{\R}{\mathbb{R}}
\renewcommand{\epsilon}{\varepsilon}
\renewcommand{\Lambda}{\Uplambda}
\newtheorem{theorem}{Theorem}[section]
\newtheorem{lemma}[theorem]{Lemma}
\newtheorem{corollary}[theorem]{Corollary}
\begin{document}


\baselineskip=17pt




\title[Infinite Transformations  ] {Weak Rational Ergodicity Does Not Imply Rational Ergodicity}

\author[T. M. Adams]{Terrence M. Adams}

\address{U.S. Government\\9161 Sterling Dr.\\Laurel, MD 20723}

\email{tmadam2@tycho.ncsc.mil}

\email{ }

\author[C. E. Silva]{Cesar E. Silva}

\address{Department of Mathematics and Statistics, Williams College, 
\\ Williamstown, MA 01267} 

\email{csilva@williams.edu}


\begin{abstract}
We extend the notion of rational ergodicity to $\beta$-rational ergodicity for $\beta > 1$. 
Given $\beta \in \R$ such that $\beta > 1$, we construct an uncountable family of rank-one infinite measure preserving 
transformations that are weakly rationally ergodic, but are not $\beta$-rationally ergodic. 
The established notion of rational ergodicity corresponds to 2-rational ergodicity. 
Thus, this paper answers an open question by showing that weak rational ergodicity 
does not imply rational ergodicity. 
\end{abstract}

\subjclass[2010]{Primary 37A25; Secondary 28D05}

\keywords{Ergodicity, Rational Ergodicity, Weakly Rationally Ergodic, Infinite Measure}

\maketitle 


\section{Introduction}

In this paper we consider ergodic properties of  invertible, infinite measure-preserving transformations 
 on $\sigma$-finite, nonatomic, Lebesgue measure spaces.  As is well known, the averages in the ergodic theorem, for ergodic infinite measure-preserving transformations, converge to $0$. 
 In 1977,   Aaronson \cite{Aa77} introduced the notion of weak rational ergodicity, where an  ergodic average for a certain class of sets converges to the expected limit, similar to the case of finite invariant measure. Aaronson also defined in the same article the notion of rational ergodicity and proved that rational ergodicity implies weak rational ergodicity but left the question of equivalence open.  In this paper we define for each real number $\beta>1$ a notion  of $\beta$-rational ergodicity, with $2$-rational ergodicity agreeing with the usual rational ergodicity. We then construct examples, for each $\beta>1$, of $\beta$-rationally ergodic transformations which are not weakly rationally ergodic. Thus in particular we show that weak rational ergodicity does not imply rational ergodicity for infinite measure-preserving transformations.

Let  $\beta $ be a real number and assume   that $\beta > 1$. 
A transformation $T$ is said to be {\bf $\beta$-rationally ergodic} if it is conservative ergodic and there exists a set $F$ 
of positive finite measure such that 
\[
\liminf_{n\to \infty} \frac{ ( \int_F \sum_{i=0}^{n-1} 
I_F(T^ix) d\mu )^{\beta} }{\int_F (\sum_{i=0}^{n-1} I_F(T^ix))^{\beta} d\mu } 
> 0 .
\]
The  notion of rational ergodicity  in \cite{Aa77} corresponds 
to $2$-rational ergodicity.  A direct application of H\"{o}lder's inequality 
shows that if $\beta_2 > \beta_1 > 1$, and $T$ is $\beta_2$-rationally ergodic, 
then $T$ is $\beta_1$-rationally ergodic. Furthermore, $T$ is said to be  {\bf weakly rationally ergodic}  \cite{Aa77} if
it is conservative ergodic and there exists  a set $F$ 
of positive finite measure such that, if we set $a_n(F)=\sum_{k=0}^{n-1}\mu(F\cap T^k F)/\mu(F)^2$,
then
\[
\lim_{n\to\infty}\frac{1}{a_n(F)}\ \sum_{k=0}^{n-1}\mu(A\cap T^k B)=\mu(A)\mu(B),
\] for all measurable $A,B\subset F$.

\section{Construction of the Examples}

Let $k_n, \ell_n$, and $m_n$ be sequences of natural numbers. 

\subsection{Initialization}
Let $I_0$ be an interval with positive length. 
Cut $C_0 = I_0$ into $k_0$ subintervals of equal length. 
Label the subintervals $C_0(i)$ for $0\leq i < k_0$. 
Stack $\ell_0$ subintervals on top of $C_0(i)$ for 
$0\leq i < k_0-1$ to form $k_0-1$ subcolumns 
of height $\ell_0 +1$. Label these subcolumns $\bar{C}_0(i)$ 
for $0\leq i < k_0-1$. Stack the subcolumns $\bar{C}_0(i)$ for $0\leq i < k_0 - 1$ 
from left to right to form a single subcolumn $\bar{C}_0$ 
of height $(k_0-1)(\ell_0 + 1)$. 
Let $\bar{C}_0(k_0-1) = C_0(k_0-1)$, which is a subcolumn of height 1. 
We have that both bases of towers $\bar{C}_0$ and $C_0(k_0 - 1)$ have 
the same measure: 
$$
\mu(C_0(k_0 - 1)) = \mu(C_0(0)) = \frac{1}{k_0} \mu (I_0) . 
$$
Cut each subcolumn $\bar{C}_0$ and $C_0(k_0 - 1)$ into $m_0$ subcolumns 
and stack from left to right. In particular, 
let $C_0(i,j)$ be the $j^{th}$ subcolumn of $\bar{C}_0(i)$ for $0\leq j < m_0$. 
Thus, as measurable sets, 
\[
\bar{C}_0 = \bigcup_{i=0}^{k_0 - 2} \bigcup_{j=0}^{m_0 - 1} C_0(i,j) 
\]
and 
\[
\bar{C}_0(k_0 - 1) = C_0(k_0 - 1) = \bigcup_{j=0}^{m_0 - 1} C_0(k_0 - 1, j) .  
\]
Stack the $C_0(k_0-1)$ subcolumn of width ${1} / {m_0 k_0}$ 
on top of the $\bar{C}_0$ subcolumn of the same width to form a single column of height 
$m_0(k_0-1)(\ell_0 + 1) + m_0$. Place $m_0(k_0-1)(\ell_0 + 1) + m_0$ spacers 
on top to form column $C_1$ of height 
$$h_1 = 2m_0(k_0-1)(\ell_0 + 1) + 2m_0 . $$

\subsection{General Step}
Let $C_n$ be a column of height $h_n$. 
Use the same procedure as above to cut $C_n$ into $k_{n}$ 
subcolumns of equal width. Separate the subcolumns into the 
first $k_{n}-1$ subcolumns and the last subcolumn. 
Add $\ell_{n}$ subintervals on top of the first $k_{n}-1$ 
subcolumns, and then stack from left to right to form 
a single subcolumn of height $(h_n + \ell_{n}) (k_{n} - 1)$. 
For the last subcolumn of height $h_n$, cut into $m_{n}$ 
subcolumns of equal width and stack from left to right. 
Also, cut the first subcolumn of height $(h_n + \ell_{n}) (k_{n} - 1)$ 
into $m_{n}$ subcolumns of equal width and stack from left 
to right. This produces two subcolumns of equal width. 
Stack the shorter subcolumn on top of the taller subcolumn, 
and add an equal number of spacers to form a single column $C_{n+1}$ 
of height: 
$$
h_{n+1} = 2m_{n}(h_n + \ell_{n}) (k_{n} - 1) + 2m_{n}h_n .
$$
Also, set $H_n = h_n + \ell_n$. 

As in the initialization, let $C_n(i)$ be the $i^{th}$ subcolumn 
from cutting $C_n$ into $k_n$ subcolumns of equal width. 
Let $\bar{C}_n(i)$ be the $i^{th}$ subcolumn including 
the $\ell_n$ spacers added on top of $C_n(i)$ for $0\leq i < k_n - 1$. 
Set $\bar{C}_n(k_n - 1) = C_n(k_n - 1)$. 
Finally, let $C_n(i,j)$ be the $j^{th}$ subcolumn of $\bar{C}_n(i)$ 
for $0\leq j < m_n$. 
For a given sequence $v=(v_n)=(k_n,\ell_n,m_n)$, this procedure 
produces a $\sigma$-finite measure preserving transformation 
$T_v:X\to X$ where $X=\bigcup_{n=1}^{\infty} C_n$. 

Suppose $L$ is the union of all subintervals added throughout the construction. 
Then $X\setminus L = I_{0,0}$ has finite $\mu$ measure, and the induced transformation 
$(T_v)_{X\setminus L}$ is ergodic and rank-one. 
For convenience, set $\mu(I_{0,0}) = 1$ and 
let $\hat{T}_v$ denote the probability preserving invertible transformation 
obtained by inducing $T_v$ on the set $X\setminus L$. 
Also, let $\hat{h}_n$ be the tower height of the rank-one transformation $\hat{T}_v$ corresponding to the tower of height $h_n$ for $T_v$. 

\subsection{$\alpha$-family}
Given a real number $x$, let 
$\lfloor x \rfloor = \max{ \{ \ell \in \mathbb N : \ell \leq x \} }$. 
In this section, we restrict $v=(k_n,\ell_n,m_n)$ such that the collection 
of transformations $T_v$ gives a sufficiently rich class of counterexamples. 
Let $\alpha \in \R$ be such that $0 < \alpha < 1$. 
Define the class $V_{\alpha}$ of infinite measure preserving transformations 
such that 
\[
V_{\alpha} = \{ T_v : v=(n+1,\lfloor n^{\alpha}\rfloor h_n,m_n), 
\lim_{n\to \infty}\frac{\lfloor n^{\alpha} \rfloor}{m_n}=0 \} . 
\]
Define the collection 
\[
V = \bigcup_{0<\alpha < 1} V_{\alpha} . \
\]
For $n\in \mathbb N$, $C_n(k_n - 1)$ is the last subcolumn of $C_n$. 
It is cut into $m_n$ subcolumns of equal width, and labeled $C_n(k_n - 1, j)$ 
for $0\leq j < m_n$. 
Define 
\[
D_n = \bigcup_{j=\lfloor n^{\alpha} \rfloor}^{m_n - 1} C_n(k_n - 1, j) . 
\]

\section{Main Results}
In this section, we state our main results, and give 
the proofs in the following two sections. 
The collection $V$ provides all the necessary counterexamples, 
including a solution to the question 
of whether weak rational ergodicity implies rational ergodicity. 

\begin{theorem}
\label{WeakRatErg}
Each transformation $T\in V$ is a weakly  rationally ergodic 
infinite measure preserving transformation. 
\end{theorem}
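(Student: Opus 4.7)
The natural choice for the reference set is $F=I_{0,0}$, on which $\mu(F)=1$ and the induced transformation $\hat T_v$ is an ergodic probability preserving rank-one transformation. The plan is to reduce sums for $T_v$ over $[0,n-1]$ to sums for $\hat T_v$ over $[0,N(x,n)-1]$, where $N(x,n)=\sum_{k=0}^{n-1} I_F(T_v^k x)$ is the return count, and then to combine Birkhoff's theorem for $\hat T_v$ with a uniform estimate on $N(x,n)$ coming from the rank-one tower structure. The basic identity is: for $A\subset F$ and $x\in F$,
\[
\sum_{k=0}^{n-1} I_A(T_v^k x)=\sum_{j=0}^{N(x,n)-1} I_A(\hat T_v^j x),
\]
which on integration over $B\subset F$ gives
\[
\sum_{k=0}^{n-1}\mu(A\cap T_v^k B)=\int_B\sum_{j=0}^{N(x,n)-1} I_A(\hat T_v^j x)\,d\mu(x).
\]
Taking $A=B=F$ yields $\int_F N(x,n)\,d\mu=a_n(F)$, so verifying weak rational ergodicity reduces to showing that the inner sum, divided by $a_n(F)$, converges to $\mu(A)\mu(B)$ in the integrated sense.

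The heart of the argument is to establish $N(x,n)/a_n(F)\to 1$ in a strong enough sense (in $L^1(F)$, or uniformly off a set of vanishing measure). This is where the rank-one geometry enters: for $x$ in the base $F_j$ of column $C_j$, the orbit $T_v^0 x,\ldots,T_v^{h_j-1}x$ climbs $C_j$ deterministically, so $N(x,h_j)$ equals a constant $\lambda_j$ independent of $x\in F_j$, namely the number of $F$-levels in $C_j$. Since $\bigcup_j F_j$ exhausts $F$ up to measure zero, this gives perfect uniformity at the tower heights $h_j$. For arbitrary $n$, one sandwiches $n$ between consecutive heights of sub-towers obtained by partial ascents through the $m_n$ stacked copies of $C_n$ appearing inside $C_{n+1}$, and uses monotonicity of $N(x,\cdot)$ in $n$. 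Combined with Birkhoff's theorem applied to the ergodic probability preserving transformation $\hat T_v$, dominated convergence then yields
\[
\sum_{k=0}^{n-1}\mu(A\cap T_v^k B)=\mu(A)\int_B N(x,n)\,d\mu(x)+o(a_n(F))\sim \mu(A)\mu(B)\,a_n(F),
\]
which is the desired statement.

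The principal obstacle is controlling $|N(x,n)-a_n(F)|$ uniformly in $x$ when $n$ is not a tower height. The condition $\lfloor n^\alpha\rfloor/m_n\to 0$ in the definition of $V_\alpha$ is exactly what makes this manageable: the spacer-to-column ratio $\ell_n/h_n=\lfloor n^\alpha\rfloor$ grows, but the number of copies $m_n$ grows much faster, so within one level of the recursion the $m_n$ interleaved copies of the tall and short subcolumns almost uniformly distribute visits to $F$ across the width of $C_{n+1}$, forcing the fluctuation of $N(x,n)$ to be small relative to its mean. Quantifying this fluctuation, by carrying the bookkeeping of returns to $F$ through the construction and showing it is $o(a_n(F))$, will be the combinatorial core of the proof.
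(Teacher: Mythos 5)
Your general framework---inducing on $F$, writing $\sum_{k=0}^{n-1}I_A(T^kx)=\sum_{j=0}^{N(x,n)-1}I_A(\hat T_v^jx)$, and invoking Birkhoff's theorem for $\hat T_v$---is a reasonable way to organize a proof, and the identity you state is correct. But the step you call the heart of the argument, namely that $N(x,n)/a_n(F)\to1$ in $L^1(F)$ or uniformly off a set of vanishing measure, is \emph{false} for these transformations, so the proof as proposed cannot be completed. Take $n=h_j$. For a point $x$ lying at induced height $\hat s\in\{0,\dots,\hat h_j-1\}$ inside one of the first $k_j-1$ subcolumns of $C_j$ (a set of measure $j/(j+1)$ in $F$), the orbit segment of length $h_j$ climbs into the $\ell_j=\lfloor j^\alpha\rfloor h_j\ge h_j$ spacers sitting above that subcolumn and does not return to $F$ within those $h_j$ steps, so $N(x,h_j)=\hat h_j-\hat s$. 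Since $\hat s$ is essentially uniformly distributed over $\{0,\dots,\hat h_j-1\}$, the ratio $N(\cdot,h_j)/a_{h_j}$ (with $a_{h_j}\sim\tfrac12\hat h_j$, the value forced by $\int_FN\,d\mu=a_{h_j}$) is approximately uniform on $[0,2]$: its $L^1$ distance to $1$ stays near $1/2$, and the set where it deviates from $1$ by more than $1/2$ has measure about $1/2$. Your supporting argument is also incorrect: $N(\cdot,h_j)$ is constant only on the base level of $C_j$, whose measure tends to $0$; the bases are nested decreasing sets and do not ``exhaust $F$'' in any useful sense, so there is no uniformity at the tower heights for generic $x\in F$.

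What weak rational ergodicity actually requires is only the weak-type statement $\frac{1}{a_n}\int_BN(x,n)\,d\mu\to\mu(B)$ for every measurable $B\subset F$ (convergence of $N/a_n$ to $1$ against indicators, not in norm), together with control of the Birkhoff error term on the exceptional set where $N$ is abnormally large---the last subcolumn $C_n(k_n-1)$, where $N(\cdot,H_n)$ is of order $\lfloor n^\alpha\rfloor\hat h_n$ on a set of measure $1/(n+1)$; disposing of that set is exactly the content of the paper's Lemma \ref{lem1}. Proving the weak statement for all times $t$, not just at the heights $qH_n$, is where essentially all the work lies: one must define $a_t$ piecewise (the paper's three cases according to the residue of $t$ modulo $H_n$), with correction factors such as $(1-\frac{q}{2(n+1)m_n})$ and the additive term $\frac12\hat h_n$ that record precisely the non-uniformity of $N$ described above. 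Your proposal defers all of this to ``the combinatorial core'' while asserting a form of uniformity that the construction is specifically designed to destroy; as written, the argument has a genuine gap.
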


\begin{theorem}
\label{RatErg}
Suppose $\alpha, \beta \in \R$ such that $0 < \alpha < 1$ and $\alpha \beta > 1$. 
If $T\in V_{\alpha}$, then for every set $F$ of positive finite measure, 
\[
\liminf_{n\to \infty} \frac{ ( \int_F \sum_{i=0}^{H_n-1} I_F(T^ix) d\mu )^{\beta} }{\int_F (\sum_{i=0}^{H_n-1} I_F(T^ix))^{\beta} d\mu } 
= 0 .
\]
In other words, $T$ is not $\beta$-rationally ergodic. 
\end{theorem}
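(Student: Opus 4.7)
The plan is to exploit the asymmetric structure of $C_{n+1}$, which decomposes into a tall sub-tower $T_n$ of height $m_n(k_n-1)H_n$ and a short sub-tower $S_n$ of height $m_n h_n$ sitting on top of it (followed by the final spacers). Inside $T_n$ the $F$-levels repeat periodically with period $H_n$, with exactly $p_n$ $F$-levels in each period (here $p_n$ denotes the number of $F$-levels in $C_n$, and the new $\ell_n$ spacers fill the top of each period). Inside $S_n$ the $F$-levels repeat with period $h_n$, again $p_n$ per period, and no spacers appear. A standard rank-one approximation reduces the problem to the case when $F$ is a finite union of levels of some $C_N$; after this reduction one has $F\subset C_{n+1}$, $p_n w_n = \mu(F)$, $\mu(F\cap T_n) = (1-1/k_n)\mu(F)$, and $\mu(F\cap S_n) = \mu(F)/k_n$ for every $n\ge N$, where $w_n$ denotes the width of a level of $C_n$.

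The key periodicity observation is that if the length-$H_n$ forward orbit of $x$ stays inside $T_n$, then $\sum_{i=0}^{H_n-1} I_F(T^i x) = p_n$ exactly, because the orbit covers one full period-$H_n$ window of levels and the count is offset-independent. Similarly, if the orbit stays inside $S_n$, then $\sum_{i=0}^{H_n-1} I_F(T^i x) = (\lfloor n^\alpha\rfloor+1)\,p_n$ since $H_n$ equals $\lfloor n^\alpha\rfloor+1$ periods of length $h_n$. The set of $x\in S_n$ whose orbit escapes into the spacer region above $C_{n+1}$ lies in a top slab of height $H_n$, so the hypothesis $\lfloor n^\alpha\rfloor/m_n\to 0$ guarantees that the good part of $F\cap S_n$ still has mass $(1-o(1))\mu(F)/k_n \asymp 1/n$. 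Integrating over this good set yields
\[
\int_F \Bigl(\sum_{i=0}^{H_n-1} I_F(T^i x)\Bigr)^{\beta} d\mu \;\geq\; c\, n^{\alpha\beta-1}\, p_n^\beta
\]
for a positive constant $c$ depending only on $\mu(F)$.

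For the upper bound on the linear integral one uses the ``$p_n$ per orbit'' estimate inside $T_n$ (which captures essentially all of the $F$-mass) together with the bound $\int_{F\cap S_n}\sum I_F(T^i x)\, d\mu \leq \mu(F\cap S_n)\cdot (\lfloor n^\alpha\rfloor+1)\,p_n \asymp n^{\alpha-1}\, p_n$, which is $o(p_n)$ because $\alpha<1$; the boundary where an $H_n$-orbit crosses from $T_n$ into $S_n$ contributes only through the $O(1/(k_n m_n))$ amount of $F$-mass in the top slab of $T_n$, which is negligible. This gives $\int_F \sum_{i=0}^{H_n-1} I_F(T^i x)\, d\mu \leq C\, p_n$, whence the ratio in the theorem is at most $(C^\beta/c)\, n^{1-\alpha\beta}$, which tends to $0$ along the subsequence $H_n$ by the assumption $\alpha\beta>1$. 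The main technical obstacle is the combinatorial bookkeeping for $T_n$ (verifying that every length-$H_n$ window inside $T_n$ contains exactly $p_n$ $F$-levels regardless of starting offset, and handling the few orbits that cross block boundaries); the initial approximation step is routine but must be done with care so that the $\epsilon$-error does not swamp the $p_n^\beta$ scale in the lower bound.
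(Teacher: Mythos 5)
Your core mechanism is the same one the paper uses: the last subcolumn $C_n(k_n-1)$ (your $S_n$) is stacked without spacers, so an orbit of length $H_n=(\lfloor n^{\alpha}\rfloor+1)h_n$ starting there meets $F$ about $(\lfloor n^{\alpha}\rfloor+1)p_n$ times instead of $p_n$ times, and since this happens on a set of measure $\mu(F)/(n+1)$ the $\beta$-th moment picks up a factor $n^{\alpha\beta-1}$ that the first moment does not, giving a ratio of order $n^{1-\alpha\beta}\to 0$. For $F$ equal to a union of levels of some $C_N$ your periodicity counts and your treatment of the boundary and spacer effects are correct and essentially reproduce the paper's estimates.

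The genuine gap is the opening sentence ``a standard rank-one approximation reduces the problem to the case when $F$ is a finite union of levels.'' The theorem quantifies over \emph{every} $F$ of positive finite measure, so you cannot simply prove the statement for unions of levels; you must transfer the estimate from the approximant $J$ back to $F$, and this transfer is not routine. The entire lower bound on $\int_F(\sum I_F\circ T^i)^{\beta}d\mu$ lives on $F\cap S_n$, a set of measure only about $\mu(F)/(n+1)\to 0$, and an $L^1$-approximation $\mu(F\triangle J)<\epsilon$ gives no control whatsoever on $\mu(F\cap S_n)$ for a given $n$: the symmetric difference could swallow $J\cap S_n$ entirely, and indeed $\mu(F\cap C_n(k_n-1))$ could vanish for any particular $n$. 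What saves the argument is that the sets $C_n(k_n-1)\cap C_N$, $n\ge N$, are pairwise independent in $C_N$ with $\sum_n \mu_N(C_n(k_n-1)\cap C_N)=\sum_n \tfrac{1}{n+1}=\infty$, so a second-moment (Borel--Cantelli type) argument --- the paper's Mixing Lemma \ref{mixlem} --- produces infinitely many $n$ for which $F$ fills up a proportion $1-\delta^2$ of $J\cap C_n(k_n-1)$; only then can one run your $S_n$ computation with $F$ itself in place of $J$ (via the subfamily $G$ of intervals where $F$ has density $\ge 1-\delta$). Your proposal contains no mechanism for locating these infinitely many good $n$ for a general $F$, and without one the proof does not close. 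The rest of your outline (the upper bound $\int_F\sum_{i=0}^{H_n-1}I_F(T^ix)\,d\mu\le Cp_n\mu(F)$ and the final comparison) is sound once that step is supplied.
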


\begin{corollary}
\label{RatErgCor1}
For each $\beta > 1$, there exists an infinite measure preserving transformation 
$T$ such that $T$ is weakly rationally ergodic, but not $\beta$-rationally ergodic. 
\end{corollary}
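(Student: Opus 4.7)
The plan is to observe that the corollary follows almost immediately from the two theorems already stated, so essentially all the work lies in the right choice of parameter $\alpha$. Given $\beta > 1$, since $1/\beta < 1$, the interval $(1/\beta, 1)$ is nonempty, so I can pick any $\alpha \in \R$ with $1/\beta < \alpha < 1$; by construction this $\alpha$ satisfies both $0 < \alpha < 1$ (needed to invoke the $\alpha$-family) and $\alpha\beta > 1$ (the hypothesis of Theorem \ref{RatErg}).

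Next I need a transformation $T \in V_\alpha$, so I verify that the class $V_\alpha$ is nonempty. This only requires producing a sequence $(k_n, \ell_n, m_n)$ with $k_n = n+1$, $\ell_n = \lfloor n^\alpha \rfloor h_n$, and $m_n \in \N$ satisfying $\lfloor n^\alpha \rfloor / m_n \to 0$; for example $m_n = \lfloor n^\alpha \rfloor \cdot n + 1$ will do. This defines a concrete $T = T_v \in V_\alpha \subset V$.

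I then apply the two main theorems to this $T$. Since $T \in V$, Theorem \ref{WeakRatErg} yields that $T$ is weakly rationally ergodic. Since $T \in V_\alpha$ and the pair $(\alpha,\beta)$ satisfies $\alpha\beta > 1$, Theorem \ref{RatErg} yields that for every set $F$ of positive finite measure,
\[
\liminf_{n\to \infty} \frac{ \bigl( \int_F \sum_{i=0}^{H_n-1} I_F(T^ix)\, d\mu \bigr)^{\beta} }{\int_F \bigl( \sum_{i=0}^{H_n-1} I_F(T^ix)\bigr)^{\beta}\, d\mu } = 0 .
\]
Passing to a subsequence cannot increase the liminf, so the liminf over all $n\in\N$ is also $0$ for every such $F$. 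Hence no finite-measure set $F$ can witness the positivity condition in the definition of $\beta$-rational ergodicity, and $T$ is not $\beta$-rationally ergodic.

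There is no real obstacle here; the entire content of the corollary is packaged into the two theorems, and the only thing to check is that the constraints $0<\alpha<1$ and $\alpha\beta>1$ are simultaneously satisfiable for each $\beta>1$, which is immediate from $\beta>1 \Leftrightarrow 1/\beta<1$.
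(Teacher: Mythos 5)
Your proof is correct and follows exactly the paper's argument: pick $\alpha\in(1/\beta,1)$ so that $0<\alpha<1$ and $\alpha\beta>1$, take any $T\in V_\alpha\subset V$, and invoke Theorem \ref{WeakRatErg} and Theorem \ref{RatErg}. The extra details you supply (nonemptiness of $V_\alpha$ via an explicit choice of $m_n$, and the remark that the liminf along the subsequence $H_n$ bounds the full liminf) are harmless elaborations of what the paper leaves implicit.
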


\begin{proof} 
Given $\beta > 1$, choose $\alpha < 1$ such that $\alpha \beta > 1$. 
Let $T$ be any transformation in $V_{\alpha} \subset V$. 
By Theorem \ref{WeakRatErg}, $T$ is weakly rationally ergodic, and 
by Theorem \ref{RatErg}, $T$ is not $\beta$-rationally ergodic. 
\end{proof}

\begin{corollary}
There exist infinite measure preserving transformations $T$ 
that are weakly rationally ergodic, but are not rationally ergodic. 
\end{corollary}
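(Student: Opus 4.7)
The plan is to derive this statement as the special case $\beta = 2$ of the preceding Corollary \ref{RatErgCor1}. The introduction explicitly identifies the original notion of rational ergodicity from \cite{Aa77} with $2$-rational ergodicity, so it suffices to exhibit an infinite measure preserving transformation that is weakly rationally ergodic but fails to be $2$-rationally ergodic, and Corollary \ref{RatErgCor1} already yields such a transformation once one takes $\beta = 2$.

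Concretely, I would fix any $\alpha \in (1/2, 1)$, so that $\alpha \beta = 2\alpha > 1$. Then I would pick any admissible sequence $v = (n+1, \lfloor n^\alpha \rfloor h_n, m_n)$ with $m_n$ growing fast enough that $\lfloor n^\alpha \rfloor / m_n \to 0$, thereby placing $T_v$ in $V_\alpha \subset V$. Theorem \ref{WeakRatErg} then gives that $T_v$ is weakly rationally ergodic, while Theorem \ref{RatErg} applied with this $\alpha$ and with $\beta = 2$ gives that $T_v$ is not $2$-rationally ergodic, hence not rationally ergodic in the sense of \cite{Aa77}.

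There is no real obstacle in this final step, since all the substantive work is already packaged in Theorems \ref{WeakRatErg} and \ref{RatErg}; the corollary simply records the classically relevant instance $\beta = 2$ and, in doing so, answers Aaronson's open question in the negative.
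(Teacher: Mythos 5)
Your proposal is correct and matches the paper's proof, which is simply to invoke Corollary \ref{RatErgCor1} with $\beta = 2$; your additional unpacking (choosing $\alpha \in (1/2,1)$ and a transformation in $V_\alpha$) just restates the proof of that corollary. No issues.
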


\begin{proof} 
Apply Corollary \ref{RatErgCor1} with $\beta = 2$. 
\end{proof}

\section{Weakly Rationally Ergodic Examples}
To establish weak rational ergodicity, we set $F = I_0$. 
Given $N\in \mathbb N$, define 
\[
a_N(\alpha) = \sum_{i=0}^{N-1} \mu (F\cap T_{\alpha}^i F) . 
\]
Let $i,n\in \mathbb N$ be such that $0 \leq i \leq n$, and 
$F_n(i) = F \cap C_n(i)$.  Define 
\[
b_N^n(\alpha) =  \sum_{i=0}^{N-1} [ \mu (F\cap T_{\alpha}^i F_n(n))  + \mu (F_n(n)\cap T_{\alpha}^i F) ] .
\]

\begin{lemma}
\label{lem1}
Suppose $t_n \in \mathbb N$ such that $0 < t_n < h_{n+1}$ for $n\in \mathbb N$. 
For $\alpha \in (0,1)$, 
\[
\lim_{n\to \infty} \frac{b_{t_n}^n(\alpha)}{a_{t_n}(\alpha)} = 0 . 
\]
\end{lemma}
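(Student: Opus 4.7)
The strategy is to estimate both $b_{t_n}^n(\alpha)$ and $a_{t_n}(\alpha)$ by pair-counting within the tower $C_{n+1}$. The set $F_n(n)=F\cap C_n(k_n-1)$ has measure $1/k_n=1/(n+1)$ and, viewed inside $C_{n+1}$, its levels lie entirely in the \emph{stacked short} region (of height $m_nh_n$, sitting just below the half-tower spacer cap of $C_{n+1}$). By contrast, the pieces $F_n(j)$ for $j<k_n-1$, whose total measure is $(k_n-1)/k_n$, all lie in the \emph{big subcolumn} of height $M_n=m_n(k_n-1)H_n$ that occupies the bottom half of the non-spacer part of $C_{n+1}$. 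The plan exploits this imbalance together with $k_n\to\infty$.

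First, using $T_\alpha$-invariance, write $b_{t_n}^n=\int_{F_n(n)}S_{t_n}\,d\mu+\int_{F_n(n)}S^{-}_{t_n}\,d\mu$ where $S^{\pm}_{t_n}(x)=\sum_{i=0}^{t_n-1}I_F(T_\alpha^{\pm i}x)$. Each integral equals $w_{n+1}$ times a count of pairs $(\ell,\ell')$ with $\ell$ an $F_n(n)$-level of $C_{n+1}$, $\ell'$ an $F$-level of $C_{n+1}$, and $|\ell'-\ell|<t_n$. Because $t_n<h_{n+1}\le\ell_{n+1}$ for $n\ge 1$, the spacer band of height at least $h_{n+1}$ above the stacked short (combining $C_{n+1}$'s top half and the first spacer block of $C_{n+2}\setminus C_{n+1}$) ensures no further $F$-visits occur on the forward side; the backward side is handled similarly, possibly reaching into the big subcolumn. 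Using that $F$-levels in the stacked short are $h_n$-periodic with $r_n$ levels per period, the pair count is at most $m_nr_n\cdot r_n(t_n/h_n+1)$, yielding
\[
b_{t_n}^n \;\le\; \frac{C}{k_n}\!\left(1+\frac{r_nt_n}{h_n}\right).
\]

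Second, for the lower bound on $a_{t_n}$, decompose $F=\bigsqcup_{j=0}^{k_n-1}F_n(j)$. For $j<k_n-1$, the same pair-count formula applied with $A=F_n(j)$ in place of $F_n(n)$ counts pairs within the big subcolumn. The key recursive estimate is
\[
P(n,t_n):=\#\{(i_0,i_0'):i_0,i_0'\in F\text{-levels of }C_n,\ 0\le i_0'-i_0<t_n\}\;\asymp\;r_n\!\left(1+\frac{r_nt_n}{h_n}\right),
\]
obtained inductively by the same decomposition one level down. This delivers $A_j:=\int_{F_n(j)}S_{t_n}\,d\mu\ge(c/k_n)(1+r_nt_n/h_n)$ for $t_n<h_n$, while for larger $t_n$ additional returns from distinct $\bar C_n(j')$-blocks (spaced $H_n$ apart) give $A_j\gtrsim r_nt_n/(k_nH_n)$. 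Summing over $j<k_n-1$,
\[
a_{t_n}\;\ge\;c'\max\!\left(1+\frac{r_nt_n}{h_n},\ \frac{r_nt_n}{H_n}\right).
\]

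Combining: for $t_n<h_n$, upper and lower bounds are of the same order $(1+r_nt_n/h_n)$ up to a factor $k_n$, so the ratio is $O(1/k_n)\to 0$. For $t_n\ge h_n$, the ratio is bounded by $CH_n/(k_nh_n)=O((1+\lfloor n^\alpha\rfloor)/(n+1))$, which also tends to $0$ since $\alpha<1$. The main technical obstacle is establishing the recursive pair-count $P(n,t_n)\asymp r_n(1+r_nt_n/h_n)$, which underlies both bounds simultaneously; once in hand, the remaining work is elementary asymptotics leveraging $\alpha<1$ and $k_n=n+1$.
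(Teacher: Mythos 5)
Your structural idea is the right one, and it is essentially the paper's: inside $C_{n+1}$ the set $F_n(n)$ occupies the $m_n$ stacked copies of $C_n(k_n-1)$, the remaining $(k_n-1)m_n$ copies of $C_n$ (separated by $\ell_n$ spacers) carry the bulk of $F$, and the lemma comes from trading the factor $1/k_n$ against the factor $H_n/h_n=1+\lfloor n^{\alpha}\rfloor$. The paper implements this by comparing $\sum_t\mu(F\cap T^tF_n(n))$ copy by copy with $\sum_t\mu(G_n\cap T^tG_n)$ for a large subset $G_n$ of the big subcolumn, so that the common one-pass quantity $\sum_t\mu(F\cap T^tF_n(n,0))$ cancels and only the ratio of the numbers of copies survives.

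The gap is that you route both bounds through the explicit asymptotic $P(n,t_n)\asymp r_n(1+r_nt_n/h_n)$, and that claim is false: the $F$-levels of $C_n$ are not equidistributed across scales. If $h_j\le t_n<h_{j+1}$ with $j\ll n$, then $P(n,t_n)/r_n\approx 1+r_{j+1}t_n/h_{j+1}$, and the local density $r_{j+1}/h_{j+1}$ exceeds the global density $r_n/h_n$ by the unbounded factor $\prod_{i=j+1}^{n-1}2(1+\lfloor i^{\alpha}\rfloor)$, since each stage doubles the column with spacers and inserts $\ell_i=\lfloor i^{\alpha}\rfloor h_i$ further spacers. Concretely, $P(n,h_{n-1})\approx r_nr_{n-1}/2$ while $r_n(1+r_nh_{n-1}/h_n)\approx r_nr_{n-1}/\bigl(2(1+\lfloor(n-1)^{\alpha}\rfloor)\bigr)$, so the proposed induction ``one level down'' does not close. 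Consequently your displayed upper bound $b^n_{t_n}\le (C/k_n)(1+r_nt_n/h_n)$ is false (e.g.\ for $t_n=h_{n-1}$), and dividing it by the (true but far from sharp) lower bound on $a_{t_n}$ does not prove the lemma. The repair is to never pass to the density formula: bound the stacked-short pair count by $Cm_n$ times the single-copy count and the big-subcolumn count from below by $(k_n-1)m_n$ times that same count, so the unknown quantity cancels and only $C/(k_n-1)$, or $(1+\lfloor n^{\alpha}\rfloor)/n$ in the long-window regime, remains. Two further slips: $r_n(t_n/h_n+1)=r_n+r_nt_n/h_n$ has constant term $r_n$, not $O(1)$; and the spacer cap above the stacked short has height $h_{n+1}/2$, not $h_{n+1}$ (the last subcolumn of $C_{n+1}$ receives no $\ell_{n+1}$-spacers), so for $t_n$ near $h_{n+1}$ a $1/k_{n+1}$-fraction of forward orbits from the stacked short do revisit $F$ and must be estimated rather than dismissed.
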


\begin{proof} 
Let $T \in V_{\alpha}$, and $F_n(i,j) = F \cap C_n(i,j)$ for $n\in \mathbb N$. 
First, suppose $t_n < m_n h_n$. Let $p_n = \lfloor \frac{(n-1)m_n}{n} \rfloor$ and let 
\[
G_n = \bigcup_{i=0}^{k_n - 2} \bigcup_{j=0}^{p_n} F_n(i,j) . 
\]
Suppose $r\in \mathbb N$ such that $0\leq r < m_n h_n - H_n$. 
Then for $0\leq i < n$ and $0\leq j < p_n$, 
\[
\sum_{t=0}^{H_n-1} \mu(G_n \cap T^{t+r} F_n(i,j)) = \sum_{t=0}^{h_n-1} \mu(F \cap T^{t+r} F_n(n,0)) . 
\]
Also, for $r\in \mathbb N$ such that $h_n \leq r < H_n$ and $n$ sufficiently large, 
\[
\sum_{t=0}^{r-1} \mu(G_n \cap T^{t} F_n(i,j)) > \frac{1}{3} \sum_{t=0}^{h_n-1} \mu(F \cap T^{t} F_n(n,0)) . 
\]
Thus, for $n$ sufficiently large, 
\begin{eqnarray*}
\sum_{t=0}^{r-1} \mu(G_n \cap T^{t} G_n) &>& \frac{p_n(k_n-1)}{3} \sum_{t=0}^{h_n-1} \mu(F \cap T^{t} F_n(n,0)) \\ 
&>&  \frac{m_nn}{6} \sum_{t=0}^{h_n-1} \mu(F \cap T^{t} F_n(n,0)) \\ 
&\geq&  \frac{n}{6} \sum_{t=0}^{h_n-1} \mu(F \cap T^{t} F_n(n)) \\ 
&=&  \frac{n}{6(n^{\alpha}+1)} (n^{\alpha} + 1) \sum_{t=0}^{h_n-1} \mu(F \cap T^{t} F_n(n)) \\ 
&\geq&  \frac{n}{6(n^{\alpha}+1)} \sum_{t=0}^{r-1} \mu(F \cap T^{t} F_n(n)) . 
\end{eqnarray*}
Since 
\[
\lim_{n\to \infty} \frac{6(n^{\alpha} + 1)}{n} = 0 , 
\]
then our lemma holds for $h_n \leq t_n < H_n$. Similarly, it holds for $0 < t_n < H_n$. 
To establish for $H_n \leq t_n < m_n h_n$, 
let $t_n = q_n H_n + r_n$ where $q_n \in \mathbb N$ and $0\leq r_n < H_n$. 
Then 
\begin{eqnarray}
\sum_{t=0}^{t_n-1} \mu(G_n \cap T^{t} G_n) &=& 
\sum_{t=0}^{r_n-1} \mu(G_n \cap T^{t} G_n) \label{first} \\ 
&+& \sum_{q=0}^{q_n-1} \sum_{t=0}^{H_n-1} \mu(G_n \cap T^{t+qH_n + r_n} G_n) \label{second}
\end{eqnarray} 
We already established our lemma for (\ref{first}), so we now handle (\ref{second}). 
\begin{align*}
\sum_{q=0}^{q_n-1} \sum_{t=0}^{H_n-1} \mu &(G_n \cap T^{t+qH_n + r_n} G_n) \\ 
&= \sum_{q=0}^{q_n-1} \sum_{t=0}^{H_n-1} \sum_{i=0}^{k_n-2} \sum_{j=0}^{p_n - 1} \mu(G_n \cap T^{t+qH_n+r_n} F_n(i,j)) \\ 
&= \sum_{q=0}^{q_n-1} \sum_{t=0}^{h_n-1} \sum_{i=0}^{n-1} \sum_{j=0}^{p_n - 1} \mu(F \cap T^{t+qh_n+r_n} F_n(n,0)) \\ 
&= \sum_{q=0}^{q_n-1} \sum_{t=0}^{h_n-1} n p_n \mu(F \cap T^{t+qh_n+r_n} F_n(n,0)) \\ 
&\geq \sum_{q=0}^{q_n-1} \sum_{t=0}^{h_n-1} 
\frac{n m_n}{2(n^{\alpha}+1)} (n^{\alpha}+1)\mu(F \cap T^{t+qh_n+r_n} F_n(n,0)) \\ 
&\geq \sum_{q=0}^{q_n-1} \sum_{t=0}^{H_n-1} 
\frac{n}{2(n^{\alpha}+1)} \mu(F \cap T^{t+qh_n+r_n} F_n(n)) \\ 
&= \sum_{t=r_n}^{t_n-1} \frac{n}{2(n^{\alpha}+1)} \mu(F \cap T^{t} F_n(n))
\end{align*}
Once again, since 
\[
\lim_{n\to \infty} \frac{2(n^{\alpha} + 1)}{n} = 0 , 
\]
then our lemma is established for $0 < t_n < m_n h_n$. 

Note that $\mu(F\cap T^t F_n(n)) = 0$ for $m_nh_n \leq t < h_{n+1} - m_n h_n$. 
If $t_n \geq h_{n+1} - m_n h_n$, the partial sum 
\[
\sum_{t=h_{n+1} - m_n h_n}^{t_n-1} \mu (F\cap T^t F_n(n)) 
\]
may be handled in a similar manner as above. 
Also, the case of $\sum_{t=0}^{t_n-1} \mu (T^tF\cap F_n(n))$ 
follows in a similar way. This completes the proof of our lemma. 

\end{proof}

\begin{lemma} 
\label{case0}
Let $T\in V_{\alpha}$ such that $0 < \alpha < 1$. 
Also, let $F=I_0$ and $A,B \subset F$ be measurable. 
Suppose $t_n = q_n H_n$ such that $1\leq t_n < h_{n+1}$ for $n\in \mathbb N$. 
If 
\[
a_{t_n} = \hat{h}_n q_n (1 - \frac{q_n}{2(n+1)m_n}) , 
\]
then 
\[
\lim_{n\to \infty} \frac{1}{a_{t_n}} \sum_{i=0}^{t_n-1} \mu(A\cap T^i B) = \mu(A)\mu(B) .
\]
\end{lemma}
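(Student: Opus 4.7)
The strategy is a direct combinatorial count of level alignments inside $C_{n+1}$, after two standard reductions.

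First, by the rank-one density argument, it suffices to take $A$ and $B$ as finite unions of non-spacer levels of $C_n$ for $n$ large; writing $|P_A|$ and $|P_B|$ for the numbers of $C_n$-levels comprising $A$ and $B$, one has $\mu(A) = |P_A|/\hat{h}_n$ and $\mu(B) = |P_B|/\hat{h}_n$. By Lemma~\ref{lem1}, the contribution of intersections involving $F_n(k_n-1) = F \cap C_n(k_n-1)$ is bounded by $b_{t_n}^n(\alpha)\mu(A)\mu(B) = o(a_{t_n}\mu(A)\mu(B))$, so I may assume $A, B \subset \bigcup_{i=0}^{k_n-2} F_n(i)$, i.e., $A$ and $B$ are supported on the $\bar{C}_n$-part of $C_{n+1}$. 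This part consists of $m_n(k_n - 1) = m_n n$ vertically stacked copies of $C_n$ (each of height $H_n$, with $\ell_n$ spacer levels above the $h_n$ copy-of-$C_n$ levels), indexed by $c \in [0, m_n n)$; a non-spacer level at relative position $p$ in copy $c$ sits at absolute position $cH_n + p$ with width $w_{n+1} = 1/\hat{h}_{n+1} = 1/((n+1)m_n\hat{h}_n)$.

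The main counting step: each quadruple $(c_A, c_B, p_A, p_B) \in [0, m_n n)^2 \times P_A \times P_B$ contributes $w_{n+1}$ to $\mu(A \cap T^i B)$ at the unique shift $i = (c_A - c_B)H_n + (p_A - p_B)$. Since $H_n = (1+\lfloor n^\alpha\rfloor)h_n \geq 2h_n$, the condition $i \in [0, q_n H_n)$ identifies $s := c_A - c_B \in \{0, 1, \ldots, q_n - 1\}$ up to boundary effects, with $m_n n - s$ valid copy-pairs and all $|P_A||P_B|$ level-position pairs contributing for each such $s$. Thus
\begin{equation*}
\sum_{i=0}^{t_n-1}\mu(A\cap T^iB) \;\approx\; w_{n+1}|P_A||P_B|\sum_{s=0}^{q_n-1}(m_n n - s) \;=\; \mu(A)\mu(B)\cdot\hat{h}_n q_n\cdot\frac{n}{n+1}\left(1 - \frac{q_n-1}{2m_n n}\right),
\end{equation*}
using $\hat{h}_n w_{n+1}\cdot m_n n = n/(n+1)$. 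The factor $n/(n+1) \to 1$, and a direct check shows the ratio $(1 - (q_n-1)/(2m_n n))/(1 - q_n/(2(n+1)m_n)) \to 1$ as $n\to\infty$, so the right-hand side is asymptotic to $\mu(A)\mu(B)\cdot a_{t_n}$, yielding the claim.

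The main obstacle is the clean treatment of boundary corrections: at $s = 0$, only pairs with $p_A \geq p_B$ give a nonnegative shift; at $s = q_n - 1$ or $s = q_n$, only pairs with appropriate sign of $p_A - p_B$ keep $i < t_n$; and shifts of the form $sH_n + r$ with $r \in (H_n - h_n, H_n)$ produce cross-copy alignments at effective distance $s+1$. Each such correction is of order $O(|P_A||P_B|w_{n+1}h_n)$ per $s$, aggregating to a total negligible relative to $\mu(A)\mu(B)\cdot a_{t_n}$. Combined with the Lemma~\ref{lem1} absorption of $F_n(k_n-1)$-contributions, this completes the argument.
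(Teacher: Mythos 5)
Your argument is essentially the paper's own: a count of level alignments over the $(n+1)m_n$ subcolumns of $C_n$ sitting inside $C_{n+1}$, with Lemma~\ref{lem1} invoked to discard the $C_n(k_n-1)$ contributions; you merely organize the count by the copy-index difference $s$ rather than subcolumn by subcolumn, and you are more explicit about widths, the reduction to unions of levels, and the boundary terms than the paper's sketch (which says only "produces on average approximately weight"). One caveat, which your write-up shares with the paper's proof rather than introduces: the count $\sum_{s=0}^{q_n-1}(m_n n - s)$ and the claimed convergence of $\bigl(1-(q_n-1)/(2m_n n)\bigr)/\bigl(1-q_n/(2(n+1)m_n)\bigr)$ to $1$ both presuppose $q_n \lesssim n m_n$ --- otherwise the summand must be truncated at zero and the ratio need not tend to $1$ (e.g.\ when $q_n/(nm_n)\to\lambda\in(1,2]$ the true sum plateaus near $\tfrac12 (nm_n)^2 w_{n+1}|P_A||P_B|$ while $a_{t_n}$ keeps decreasing) --- whereas the hypothesis $t_n<h_{n+1}$ permits $q_n$ up to roughly $2nm_n$; the paper's phrase "the first $(nm_n-q_n)$ subcolumns" carries exactly the same tacit restriction.
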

\begin{proof}
This lemma may be proven using a counting argument on the 
$k_n m_n = (n+1) m_n$ subcolumns comprising $C_n$. 
By Lemma \ref{lem1}, we may assume $A\cap C_n(n) = \emptyset$ and 
$B\cap C_n(n) = \emptyset$. 
In this case, we may disregard $i \geq (n+1)m_n H_n$ in the summation, 
since $\mu (A\cap T^iB) = 0$ for $h_{n+1} > i \geq (n+1)m_n H_n$. 
In the above summation, each of the first $(nm_n - q_n)$ subcolumns, 
produces on average approximately weight 
\[
\frac{q_n \hat{h}_n \mu (A) \mu (B)}{(n+1)m_n} . 
\]
The next $(q_n - 1)$ subcolumns produces an approximate total weight 
\[
\frac{(q_n^2 - q_n)}{2(n+1)m_n} \hat{h}_n \mu (A) \mu (B) . 
\]
Therefore, the total weight is approximately 
\begin{align*}
\hat{h}_n q_n \mu (A)\mu (B) & \frac{(2nm_n - 2q_n + q_n - 1)}{2(n+1)m_n} \\ 
\sim & \ \hat{h}_n q_n \mu (A)\mu (B) ( 1 - \frac{q_n}{2(n+1)m_n} ) . 
\end{align*}

\end{proof}

\noindent 
The previous lemma gives a formula for $a_t$ for certain values 
of $t\in \mathbb N$. 
Here we show how to define $a_t$ for all $t$ sufficiently large. 
Given $t\in \mathbb N$, choose $n\in \mathbb N$ such that 
$h_n \leq t < h_{n+1}$. 
Write $t = qH_n + r$ such that $0\leq r < H_n$. 
To obtain the value of $a_t$, we separate into three cases 
based on the value of $r$: 
\begin{enumerate}
\item $h_n \leq r < H_n - h_n$, \label{case1} 
\item $r < h_n$, \label{case2} 
\item $r \geq H_n - h_n$. \label{case3} 
\end{enumerate}
{\bf Case \ref{case1}:} 
Define $a_t$ as 
\[
a_t = q \hat{h}_n (1 - \frac{q}{2(n+1)m_n}) + \frac{1}{2} \hat{h}_n .
\]

\noindent 
{\bf Case \ref{case2}:} 
Let $r = q^{\prime} H_{n-1} + r^{\prime}$ where $0\leq r^{\prime} < H_{n-1}$. 
Define $a_t$ as 
\[
a_t = q \hat{h}_n (1 - \frac{q}{2(n+1)m_n}) + q^{\prime} \hat{h}_{n-1} (1 - \frac{q^{\prime}}{2nm_{n-1}}) . 
\]

\noindent 
{\bf Case \ref{case3}:} 
Let $H_n - r = q^{\prime\prime} H_{n-1} + r^{\prime\prime}$ where $0\leq r^{\prime\prime} < H_{n-1}$. 
Define $a_t$ as 
\[
a_t = (q + 1) \hat{h}_n (1 - \frac{q}{2(n+1)m_n}) - q^{\prime\prime} \hat{h}_{n-1} (1 - \frac{q^{\prime\prime}}{2nm_{n-1}}) (1 - \frac{q}{(n+1)m_{n}}) . 
\]

\begin{theorem}
\label{mainWeakErgThm}
Fix $\alpha \in (0,1)$. 
Let $T\in V_{\alpha}$, $F=I_0$ and $A,B \subset F$ be measurable. 
Suppose $a_t$ is defined as above for $t\in \mathbb N$.  Then 
\[
\lim_{t\to \infty} \frac{1}{a_{t}} \sum_{i=0}^{t-1} \mu(A\cap T^i B) = \mu(A)\mu(B) .
\]
\end{theorem}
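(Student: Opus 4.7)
The plan is to bootstrap from Lemma \ref{case0}, which already treats $t=q_nH_n$ with $1\le t<h_{n+1}$, and to handle arbitrary $t$ by decomposing it according to the three cases used in the definition of $a_t$. Throughout, Lemma \ref{lem1} is what makes the argument go through, because it lets us discard contributions from the ``exceptional'' last subcolumn $C_n(n)$ at every level. So the first step is a reduction: by an approximation argument (and by linearity of the sum in $A$ and $B$), it suffices to prove the limit when $A,B\subset F\setminus C_n(n)$ for all $n$ sufficiently large; for the leftover parts, $b^n_{t}(\alpha)/a_{t}(\alpha)\to 0$ by Lemma \ref{lem1}.

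Next, I treat Case~\ref{case1}, where $t=qH_n+r$ with $h_n\le r<H_n-h_n$. I split
\[
\sum_{i=0}^{t-1}\mu(A\cap T^iB)=\sum_{i=0}^{qH_n-1}\mu(A\cap T^iB)+\sum_{i=qH_n}^{qH_n+r-1}\mu(A\cap T^iB).
\]
Lemma \ref{case0} handles the first sum, giving asymptotically $\hat h_n q\bigl(1-\tfrac{q}{2(n+1)m_n}\bigr)\mu(A)\mu(B)$. For the second sum, the indices $i-qH_n$ lie in the spacer block of the $C_n$-towers; the non-negligible subcolumns (those with $i<n$) contribute to $\mu(A\cap T^iB)$ only when the orbit comes back down through a shorter subcolumn. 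A counting argument on the first $n$ of the $k_n=n+1$ subcolumns shows that, averaged over the range $h_n\le r<H_n-h_n$, this partial sum is asymptotically $\tfrac12\hat h_n\mu(A)\mu(B)$, matching the $\tfrac12\hat h_n$ correction in the formula for $a_t$. The key point is that the ratio $r/H_n$ sweeps through $[h_n/H_n,1-h_n/H_n]$, and on this range the counting yields exactly the half-height average.

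Cases~\ref{case2} and~\ref{case3} are then handled by nesting the same decomposition at level $n-1$. In Case~\ref{case2}, one writes $r=q'H_{n-1}+r'$ and applies Lemma \ref{case0} \emph{again}, but at level $n-1$, on the block of length $q'H_{n-1}$; the remainder $r'<H_{n-1}$ is negligible because $q\hat h_n$ already dominates $\hat h_{n-1}$. This yields the additional term $\hat h_{n-1}q'\bigl(1-\tfrac{q'}{2nm_{n-1}}\bigr)\mu(A)\mu(B)$, exactly as in the definition of $a_t$. Case~\ref{case3} is symmetric: one writes $t=(q+1)H_n-(H_n-r)$ and subtracts, from the full $(q+1)H_n$-block value (adjusted by the factor $1-\tfrac{q}{(n+1)m_n}$ that arises because the last full $H_n$-block is counted against earlier $H_n$-blocks), the same level-$(n-1)$ contribution running downward from the top.

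The main obstacle is not any single step but the uniform bookkeeping of error terms across the three cases: one has to verify that the residual sums (the $r'$ in Case~\ref{case2}, the $r''$ in Case~\ref{case3}, and the $F_n(n)$ contributions at both levels) are $o(a_t)$ as $t\to\infty$. This is where the hypothesis $\lfloor n^{\alpha}\rfloor/m_n\to 0$ defining $V_\alpha$ is essential, since it controls the ratios $q/(n+1)m_n$ and $q'/nm_{n-1}$ appearing in the correction factors and ensures that the $C_n(n)$-pieces remain small enough for Lemma \ref{lem1} to absorb them. Once these error estimates are in place, the limit $\mu(A)\mu(B)$ follows in each case by combining the two applications of Lemma \ref{case0} with the negligible residues.
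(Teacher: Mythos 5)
Your overall strategy is the same as the paper's: reduce via Lemma \ref{lem1} to discard the $C_n(n)$ contributions, apply Lemma \ref{case0} to the full $qH_n$-block, and then account for the remainder $r$ according to the three cases, recursing to level $n-1$ in Cases \ref{case2} and \ref{case3}. The decompositions you write down and the role you assign to the hypothesis $\lfloor n^\alpha\rfloor/m_n\to 0$ all match the paper's proof.

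There is, however, one step where your argument as written would not suffice. In Case \ref{case1} you justify the correction term $\tfrac12\hat h_n$ by saying that the partial sum $\sum_{i=qH_n}^{qH_n+r-1}\mu(A\cap T^iB)$ is asymptotically $\tfrac12\hat h_n\,\mu(A)\mu(B)$ \emph{averaged over} the range $h_n\le r<H_n-h_n$, with ``$r/H_n$ sweeping through'' that interval. The theorem asserts a limit along \emph{every} sequence $t\to\infty$, so you need this asymptotic for \emph{each} $r$ in the range, not for its average over $r$; an averaging statement is compatible with the ratio oscillating for individual $t$. The reason the constant correction $\tfrac12\hat h_n$ (independent of $r$) is correct is structural: since $\ell_n=\lfloor n^\alpha\rfloor h_n\ge h_n$, once $i\ge h_n$ the translated blocks of height $h_n$ sit entirely inside the spacer region of height $\ell_n=H_n-h_n$ above each subcolumn, so $\mu(A\cap T^{qH_n+i}B)$ is essentially zero for $h_n\le i<H_n-h_n$ and the partial sum has already stabilized at its value at $r=h_n$; combined with the symmetry $\sum_{i=0}^{r-1}\mu(A\cap T^{qH_n-i}B)\sim\sum_{i=0}^{r-1}\mu(A\cap T^{qH_n+i}B)$, that stabilized value is half of the full per-block contribution $\hat h_n\,\mu(A)\mu(B)$. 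This is exactly the point the paper makes when it says the blocks ``move forward into the spacers'' and do not return; you should replace the averaging claim with this pointwise stabilization argument. The rest of your outline (including treating $r'<H_{n-1}$ and $r''<H_{n-1}$ as $o(a_t)$) is consistent with the paper.
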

\begin{proof}
By passing to a subsequence, we may assume each of 
$q, r, q^{\prime},r^{\prime}, q^{\prime \prime}, r^{\prime \prime}$ 
tends to $\infty$ or is bounded. 
For case \ref{case1}, 
separate $a_t = b_t + c_t$ where 
$b_t = q \hat{h}_n (1 - \frac{q}{2(n+1)m_n})$ and $c_t = \frac{1}{2} \hat{h}_n$. 
Thus, 
\begin{align} 
\frac{1}{a_t}\sum_{i=0}^{t-1} \mu(A\cap T^iB)  &= \label{sum1-1} \\ 
\frac{b_t}{a_t} & \frac{1}{b_t} \sum_{i=0}^{qH_n - 1} \mu (A\cap T^iB) +  
\frac{c_t}{a_t} \frac{1}{c_t} \sum_{i=0}^{r - 1} \mu (A\cap T^{qH_n+i}B) \label{sum1-2}
\end{align} 
If $q=q(t) \to \infty$ as $t \to \infty$, then 
${c_t} / {a_t} \to 0$ as $t\to \infty$, and 
we can disregard the second half of (\ref{sum1-2}). 
In this case, our theorem follows from applying Lemma \ref{case0} 
to the first half of (\ref{sum1-2}). 
Otherwise, the first half of (\ref{sum1-2}) is approximated by 
\[
\frac{b_t}{a_t} \mu (A)\mu (B) . 
\]
For case \ref{case1},
most blocks of height $h_n$ move forward into the spacers added to $C_n$ under $T^{r}$. 
Since the blocks do not return to its neighboring block due to the spacers, then we get half 
of the intersection that would occur under $\hat{T}^{\hat{h}_n}$. 
Note, due to symmetry, 
\[
\sum_{i=0}^{r-1} \mu (A\cap T^{qH_n-i}B) \sim \sum_{i=0}^{r-1} \mu (A\cap T^{qH_n+i}B) . 
\]
Thus, the second half of (\ref{sum1-2}) is approximated by 
\[
\frac{c_t}{a_t} \mu (A)\mu (B) . 
\]
Hence, for case \ref{case1}, 
\[
\lim_{t\to \infty} \frac{1}{a_{t}} \sum_{i=0}^{t-1} \mu(A\cap T^i B) = \mu(A)\mu(B) .
\]
For case (\ref{case2}), if $r=0$, then our theorem holds by Lemma \ref{case0}. 
Likewise, if $r$ is bounded, our theorem holds again by Lemma \ref{case0}. 
If $c_t = q^{\prime} \hat{h}_{n-1} (1 - \frac{q^{\prime}}{2nm_{n-1}})$, 
and $r = q^{\prime} H_{n-1}^{\prime} + r^{\prime}$, then $q^{\prime}$ bounded 
implies ${c_t} / {a_t} \to 0$. Otherwise, 
\begin{align} 
\frac{1}{c_t} \sum_{i=0}^{r - 1} & \mu (A\cap T^{i}T^{qH_n}B) \\ 
=& \frac{1}{c_t} \sum_{i=0}^{q^{\prime}H_{n-1} - 1} \mu (A\cap T^{i} T^{qH_n}B) + 
 \frac{1}{c_t} \sum_{i=0}^{r^{\prime} - 1} 
\mu (A\cap T^{i}T^{q^{\prime} H_n^{\prime}} T^{qH_n}B) . \label{sum2-2} 
\end{align} 
By the previous argument, we can disregard the second half of (\ref{sum2-2}), 
and hence, 
\begin{align} 
\lim_{t\to \infty} \frac{1}{c_t} \sum_{i=0}^{r - 1} & \mu (A\cap T^{i}T^{qH_n}B) 
= \mu (A) \mu (B) . 
\end{align} 
For case \ref{case3}, let 
\begin{align*}
\frac{1}{a_t} & \sum_{i=0}^{t - 1} \mu (A\cap T^{i}B) = 
\frac{1}{(b_t - c_t)} \sum_{i=0}^{t - 1} \mu (A\cap T^{i}B) = \\ 
& \frac{b_t}{(b_t - c_t)} \frac{1}{b_t} \sum_{i=0}^{(q+1)H_n - 1} \mu (A\cap T^{i}T^{qH_n}B)
- \frac{c_t}{(b_t - c_t)} \frac{1}{c_t} \sum_{i=r}^{H_n - 1} \mu (A\cap T^{qH_n + i}B)
\end{align*}
where 
$b_t = (q + 1) \hat{h}_n (1 - \frac{q}{2(n+1)m_n})$ and 
$c_t = q^{\prime\prime} \hat{h}_{n-1} (1 - \frac{q^{\prime\prime}}{2nm_{n-1}} )(1 - \frac{q}{(n+1)m_{n}})$ . 
If $q \to \infty$ as $t \to \infty$, then ${c_t} / {b_t} \to 0$ as $t \to \infty$ and our result follows. 
Otherwise, 
\begin{align*}
\lim_{t\to \infty} \frac{1}{c_t} \sum_{i=r}^{H_n - 1} \mu (A\cap T^{qH_n + i}B) 
=& \lim_{t\to \infty} \frac{1}{c_t} \sum_{i=0}^{H_n - r - 1} \mu (A\cap T^{(q+1)H_n - i - 1}B) \\ 
=& \mu (A) \mu (B) 
\end{align*}
and our proof is complete. 
\end{proof} 

\noindent
By setting $a_t(F)=a_t$, 
Theorem \ref{mainWeakErgThm} clearly implies Theorem \ref{WeakRatErg}. 
Therefore, we have established that 
each $T\in V$ is weakly rationally ergodic.  

\section{Non-Rationally Ergodic Examples} 
Suppose $\alpha, \beta \in \R$ such that $0 < \alpha < 1$ and 
$\alpha \beta > 1$.  In this section, we prove for each $T \in V_{\alpha}$, 
$T$ is not $\beta$-rationally ergodic.  We note that there are many examples 
that have been shown to be rationally ergodic, see e.g. \cite{Aa97}. In particular,
rank-one transformations with bounded cuts have been shown to be rationally ergodic \cite{DGPS14}.
See also \cite{AKW13,BSSSW}.
Maharam transformations are not weakly rationally ergodic \cite{Aa77}, though they are not rank-one \cite{BSSSW}.

Before we prove the main theorem, we state and prove the following 
basic lemma. 

\begin{lemma}
\label{smallintegral}
Let $T$ be an invertible infinite measure preserving ergodic transformation. Suppose 
for each set $F$ of positive finite measure, there exists a sequence $t_n \in \mathbb N$ 
and $F_n \subset F$ of positive measure such that 
$\mu(F_n) \to 0$ as $n\to \infty$ and 
\[
\limsup_{n\to \infty} \frac{\int_{F_n} \sum_{i=0}^{t_n-1} I_{F} (T^i x) d\mu}
{\int_{F} \sum_{i=0}^{t_n-1} I_{F} (T^i x) d\mu} > 0 . 
\]
Then $T$ is not $\beta$-rationally ergodic for each $\beta > 1$. 
\end{lemma}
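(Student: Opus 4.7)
The plan is to recognize that the hypothesis says the partial sum $S_n(x) = \sum_{i=0}^{t_n-1} I_F(T^i x)$ carries a nontrivial fraction of its $L^1(F)$ mass on a vanishingly small set $F_n \subset F$. Such concentration, by H\"older's inequality, forces the $L^\beta(F)$ norm of $S_n$ to dominate its $L^1(F)$ norm badly, which is exactly the obstruction to $\beta$-rational ergodicity: the ratio defining $\beta$-rational ergodicity must therefore vanish along a subsequence.

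I would fix an arbitrary set $F$ of positive finite measure, invoke the hypothesis to obtain sequences $t_n$ and $F_n\subset F$ with $\mu(F_n)\to 0$, and pass to a subsequence along which
\[
\int_{F_n} S_n\, d\mu \; \geq \; c \int_F S_n\, d\mu
\]
for some fixed $c>0$; this is possible since the hypothesis provides a positive $\limsup$.

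Next, I would apply H\"older's inequality on the small set $F_n$ with conjugate exponents $\beta$ and $\beta/(\beta-1)$,
\[
\int_{F_n} S_n\, d\mu \; \leq \; \mu(F_n)^{(\beta-1)/\beta} \Bigl( \int_{F_n} S_n^{\beta}\, d\mu \Bigr)^{1/\beta} \; \leq \; \mu(F_n)^{(\beta-1)/\beta} \Bigl( \int_F S_n^{\beta}\, d\mu \Bigr)^{1/\beta},
\]
then combine with the lower bound and raise to the $\beta$-th power to conclude
\[
\frac{\bigl(\int_F S_n\, d\mu\bigr)^{\beta}}{\int_F S_n^{\beta}\, d\mu} \; \leq \; c^{-\beta}\, \mu(F_n)^{\beta-1} \; \longrightarrow \; 0,
\]
using $\beta>1$ and $\mu(F_n)\to 0$. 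Since this holds along the extracted subsequence, the $\liminf$ over all $N\in\mathbb{N}$ is $0$, and because $F$ was arbitrary, no set of positive finite measure can witness $\beta$-rational ergodicity.

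There is no real obstacle in the argument; the only conceptual point is that H\"older must be applied on the shrinking set $F_n$ rather than on all of $F$, so that the factor $\mu(F_n)^{\beta-1}$ appears and provides the decay. This step is exactly what converts concentration of mass into failure of $\beta$-rational ergodicity, and it is the device the main theorem of the next section will feed into.
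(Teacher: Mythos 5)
Your proposal is correct and follows essentially the same route as the paper: apply H\"older's inequality on the shrinking set $F_n$ with conjugate exponents $\beta$ and $\beta/(\beta-1)$, combine with the lower bound $\int_{F_n} S_n\,d\mu \geq c\int_F S_n\,d\mu$ along a subsequence, and conclude that the $\beta$-rational ergodicity ratio is at most $c^{-\beta}\mu(F_n)^{\beta-1}\to 0$, forcing the $\liminf$ to vanish for every candidate $F$. The paper phrases the intermediate step as a bound on the ratio over $F_n$ before comparing to the ratio over $F$, but the inequalities used are identical.
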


\begin{proof} 
Let $\beta > 1$ and $\gamma = \frac{\beta}{\beta - 1}$. 
Without loss of generality, by passing to a subsequence, assume there exist $\eta > 0$ such that 
for all $n\in \mathbb N$, 
\[
\frac{\int_{F_n} \sum_{i=0}^{t_n-1} I_{F} (T^i x) d\mu}
{\int_{F} \sum_{i=0}^{t_n-1} I_{F} (T^i x) d\mu} > \eta . 
\]
By H\"{o}lder's inequality, 
\begin{eqnarray*}
\int_{F_n} ( \sum_{i=0}^{t_n-1} I_{F}(T^i x) ) I_{F_n}(x) d\mu &\leq& 
[\int_{F_n} (\sum_{i=0}^{t_n-1} I_{F}(T^i x) )^{\beta} d\mu ]^{{1} / {\beta}} 
\mu(F_n)^{{1} / {\gamma}} 
\end{eqnarray*} 
Thus, 
\begin{eqnarray*}
\frac{ [\int_{F_n} \sum_{i=0}^{t_n-1} I_{F}(T^i x) d\mu]^{\beta} }
{ \int_{F_n} (\sum_{i=0}^{t_n-1} I_{F}(T^i x) )^{\beta} d\mu } 
&\leq& \mu(F_n)^{{\beta} / {\gamma}} 
\end{eqnarray*} 
Therefore, 
\begin{eqnarray*}
\frac{ [\int_{F} \sum_{i=0}^{t_n-1} I_{F}(T^i x) d\mu]^{\beta} }
{ \int_{F} (\sum_{i=0}^{t_n-1} I_{F}(T^i x) )^{\beta} d\mu }  
&<& (\frac{1}{\eta})^{\beta} 
\frac{ [\int_{F_n} ( \sum_{i=0}^{t_n-1} I_{F}(T^i x) ) d\mu]^{\beta} }
{ \int_{F_n} (\sum_{i=0}^{t_n-1} I_{F}(T^i x) )^{\beta} d\mu }  \\ 
&\leq& (\frac{1}{\eta})^{\beta} \mu(F_n)^{{\beta} / {\gamma}}  \to 0 
\end{eqnarray*} 
as $n \to \infty$. 
\end{proof} 

\noindent 
We will use the following lemma from \cite{AdSi14}; we include the proof for completeness. 

\begin{lemma} (Mixing Lemma) 
\label{mixlem}
Let $(X,\gamma)$ be a probability space.  
Let $E_i\subset{X}$ be a sequence of pairwise independent sets satisfying 
$$\sum_{i=1}^{\infty}\gamma (E_i)=\infty .$$ 
Given any measurable set $E\subset X$ and $\varepsilon >0$, there exist 
infinitely many positive integers $i$ such that 
$\gamma (E\cap E_i)>(\gamma (E)-\varepsilon )\gamma (E_i)$. 
\end{lemma}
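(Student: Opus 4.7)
The plan is to run a second-moment argument on the partial sums $S_N=\sum_{i=1}^N I_{E_i}$, using pairwise independence to control the variance, and then argue by contradiction.

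First I would compute the first two moments of $S_N$. Writing $a_N=\sum_{i=1}^N\gamma(E_i)$, we have $\int S_N\,d\gamma=a_N$, and by pairwise independence $\int I_{E_i}I_{E_j}\,d\gamma=\gamma(E_i)\gamma(E_j)$ for $i\neq j$, hence
\[
\int S_N^2\,d\gamma = a_N + \sum_{i\neq j}\gamma(E_i)\gamma(E_j) \le a_N + a_N^2,
\]
which yields $\mathrm{Var}(S_N)\le a_N$. Applying Cauchy--Schwarz to $\int (S_N-a_N)I_E\,d\gamma$ then gives
\[
\Bigl|\int_E S_N\,d\gamma-\gamma(E)\,a_N\Bigr| \le \sqrt{\mathrm{Var}(S_N)\,\gamma(E)} \le \sqrt{a_N\,\gamma(E)}.
\]
Since $\sum\gamma(E_i)=\infty$ forces $a_N\to\infty$, this says $\int_E S_N\,d\gamma = \gamma(E)\,a_N + o(a_N)$.

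Next I would argue by contradiction. Suppose only finitely many $i$ satisfy $\gamma(E\cap E_i)>(\gamma(E)-\varepsilon)\gamma(E_i)$. Then there is an $i_0$ such that $\gamma(E\cap E_i)\le(\gamma(E)-\varepsilon)\gamma(E_i)$ for every $i\ge i_0$, so with $C=\sum_{i<i_0}\gamma(E\cap E_i)$,
\[
\int_E S_N\,d\gamma = \sum_{i=1}^N \gamma(E\cap E_i) \le C + (\gamma(E)-\varepsilon)a_N.
\]
Combining with the lower bound from the previous paragraph gives $\gamma(E)\,a_N - \sqrt{a_N\,\gamma(E)} \le C + (\gamma(E)-\varepsilon)\,a_N$, that is, $\varepsilon\,a_N \le C + \sqrt{a_N\,\gamma(E)}$. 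Since $a_N\to\infty$, this is impossible for large $N$, yielding the desired contradiction.

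There is no real obstacle here beyond book-keeping; the key input is that pairwise (not just mutual) independence suffices to bound the variance by $a_N$, which is exactly what makes the Cauchy--Schwarz error $O(\sqrt{a_N})$ negligible compared to the linear-in-$a_N$ gap created by the constant $\varepsilon$. If desired, the same conclusion can be phrased as a Paley--Zygmund-style statement, but the contradiction form above is the most direct route to the stated inequality along infinitely many indices.
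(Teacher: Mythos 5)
Your proof is correct and follows essentially the same route as the paper's: a second-moment/variance bound of $\mathrm{Var}(S_N)\le a_N$ from pairwise independence, followed by Cauchy--Schwarz against $I_E$ to show the discrepancy $\bigl|\sum_{i\le N}(\gamma(E\cap E_i)-\gamma(E)\gamma(E_i))\bigr|$ is $O(\sqrt{a_N})=o(a_N)$. The only difference is cosmetic --- you finish by an explicit contradiction while the paper leaves the final deduction implicit after showing the normalized discrepancy tends to zero.
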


\noindent {\bf Proof:}  By squaring the integrand and applying independence, 
we get the following, 
$$\int (\frac1{N} \sum_{i=1}^{N}
({\mathcal X}_{E_i}-\gamma (E_i)))^2 d\gamma 
= \frac1{N^2} \sum_{i=1}^{N} \gamma (E_i) (1 - \gamma (E_i)) 
< \frac1{N^2} \sum_{i=1}^{N} \gamma (E_i) .$$
The Cauchy-Schwartz inequality implies 
\begin{align*}
| \frac1{N} \sum_{i=1}^{N} (\gamma (E\cap E_i)-\gamma (E)\gamma (E_i)) |
&= | \int_{E} (\frac1{N} \sum_{i=1}^{N}({\mathcal X}_{E_i}-\gamma (E_i)))d\gamma | \\ 
&< \frac1N \sqrt{\sum_{i=1}^{N} \gamma (E_i)}.
\end{align*}

Thus,
$$\frac{ | \sum_{i=1}^{N} 
(\gamma (E\cap E_i)-\gamma (E)\gamma (E_i)) | }
{\sum_{i=1}^{N} \gamma (E_i)} < 
\frac{\sqrt{\sum_{i=1}^{N} \gamma (E_i)}}{\sum_{i=1}^{N} 
\gamma (E_i)} \to 0$$
as $N\to \infty$, since $\sum_{i=1}^{\infty} 
\gamma (E_i)=\infty$. 
Therefore, the lemma is established for every $\epsilon > 0$ . 
$\ \ \ \ \Box$

\vskip .2in 
\noindent 
Now we are ready for the proof of our second main theorem. 

\begin{proof}[Proof of Theorem \ref{RatErg}] 
Let $\alpha, \beta \in \R$ be such that $0 < \alpha < 1$ and $\alpha \beta > 1$. 
Let $F$ be any set of positive finite measure. 
If we assume $T$ is $\beta$-rationally ergodic, then, by Lemma \ref{smallintegral}, 
there exist $\delta_0 > 0$ and $n_0 \in \mathbb N$ 
such that if $F^{\prime} \subset F$ 
satisfies $\mu(F^{\prime}) < \delta_0$, then for $t \geq n_0$, 
\[
\frac{\int_{F^{\prime}} \sum_{i=0}^{t-1} I_{F} (T^i x) d\mu}
{\int_{F} \sum_{i=0}^{t-1} I_{F} (T^i x) d\mu} < 1 . 
\]

Let $\delta = \min{ \{ \delta_0, {1}/{10} \} }$. 
Choose $N\in \mathbb N$ such that $N > \frac{1}{\delta}$ 
and there exists a union $J$ of intervals in $C_N$ such that 
\[
\frac{\mu (F \triangle J)}{\mu(J)} < 1 -  \sqrt{1 - \delta^2} .
\]
Let $\mu_N$ be normalized $\frac{\mu}{\mu(C_N)}$ probability measure on $C_N$. 
It is straightforward to see that the sets $C_N \cap C_n(k_n - 1)$ 
are independent for $n\geq N$ and 
$\sum_{n=N}^{\infty} \mu_N(C_N \cap C_n(k_n - 1)) = \infty$. 
Hence, by Lemma \ref{mixlem}, there exists $n > N$ such that 
\begin{align*}
\mu_N (F\cap J \cap C_N &\cap C_n(k_n - 1)) \\ 
&> \sqrt{1 - \delta^2} \mu_N (F\cap J) \mu_N (C_N \cap C_n(k_n - 1) ) \\ 
&> (1 - \delta^2) \mu_N(J) \mu_N(C_N \cap C_n(k_n - 1)) \\ 
&= (1 - \delta^2) \mu_N(J \cap C_N \cap C_n(k_n - 1)) 
\end{align*}
and such that both $H_n \geq n_0$ and 
\[
 \frac{ 2^{2\beta} \mu(F)^{\beta - 1} }
{ \frac{ \lfloor n^{\alpha} \rfloor^{\beta} }{(n + 1)} 
(1 - \delta)^2 ( 1 - 5\delta - \frac{2 \lfloor n^{\alpha} \rfloor}{m_{n}} ) } 
< \delta . 
\]
The set $J \cap C_N \cap C_n(k_n - 1)$ is a union of subintervals 
in the sub-tower $C_n(k_n - 1)$. 
Suppose $\bar{J} = J \cap C_N \cap C_n(k_n - 1) = \bigcup_{i=0}^{p - 1} J(i)$ 
where each $J(i)$ is a subinterval in $C_n(k_n - 1)$. 
Define 
\[
G = \{ J(i)\subset J : \mu_N (J(i)\cap F) \geq (1-\delta)\mu(J(i)) \} . 
\]
For convenience, associate $G = \bigcup_{J(i)\in G} J(i)$. 
Then $\mu_N(G) > (1 - \delta )\mu_N(\bar{J})$. 
If $q\in \mathbb N$ such that $0\leq q < \lfloor n^{\alpha}\rfloor$, then 
for $J_j, J_k \in G$, 
\[
\mu ((F\cap J_j) \cap (\bigcup_{i=0}^{h_{n}-1} T^{-qh_{n} - i} (F\cap J_k)) 
> (1 - 2 \delta - \frac{\lfloor n^{\alpha} \rfloor}{m_{n}} ) \mu(J_j) . 
\]
Thus, there exists a subset $J_j^* \subset J_j$ satisfying 
\[
\mu(J_j^*) > ( 1 - 4\delta - \frac{2 \lfloor n^{\alpha} \rfloor}{m_{n}} )\mu(J_j) 
\]
such that for $x\in J_j^*$, 
\[
\sum_{J_k\in G} \sum_{q=0}^{\lfloor n^{\alpha} \rfloor -1} \sum_{i=0}^{h_{n}-1} I_{F\cap J_k} (T^{qh_{n} + i} x) > \frac{ p \lfloor n^{\alpha} \rfloor }{2} . 
\]
Hence, 
\begin{align*}
\int_{F\cap J_j} ( \sum_{J_k\in G} \sum_{q=0}^{\lfloor n^{\alpha} \rfloor -1} \sum_{i=0}^{h_{n}-1} 
& I_{F\cap J_k} (T^{qh_{n} + i} x) )^{\beta} d\mu \\ 
&> (\frac{ p \lfloor n^{\alpha} \rfloor }{2})^{\beta} 
( 1 - 5\delta - \frac{2 \lfloor n^{\alpha} \rfloor}{m_{n}} ) \mu(J_j) 
\end{align*}
This implies 
\begin{align}
\int_F (\sum_{i=0}^{H_{n}-1} I_F(T^i x))^{\beta} & d\mu \geq 
\sum_{J_j\in G} \int_{F\cap J_j} ( \sum_{J_k\in G} \sum_{i=0}^{H_{n}-1} I_{F\cap J_k} (T^i x))^{\beta} d\mu \\ 
&> (\frac{ p \lfloor n^{\alpha} \rfloor }{2})^{\beta} 
( 1 - 5\delta - \frac{2 \lfloor n^{\alpha} \rfloor}{m_{n}} ) \mu(G) \\ 
&> (\frac{ p \lfloor n^{\alpha} \rfloor }{2})^{\beta} 
( 1 - 5\delta - \frac{2 \lfloor n^{\alpha} \rfloor}{m_{n}} ) (1 - \delta ) \frac{\mu(J)}{(n + 1)} \\ 
&> \frac{(1 - \delta)^2}{(n + 1)} (\frac{ p \lfloor n^{\alpha} \rfloor }{2})^{\beta} 
( 1 - 5\delta - \frac{2 \lfloor n^{\alpha} \rfloor}{m_{n}} ) \mu(F) . 
\end{align}

\noindent 
Let $\hat{J} = J\cap C_N \setminus C_n(k_n-1)$ and $\bigcup_{i=0}^{p-1} \hat{J}_i = \hat{J}$ 
where each $\hat{J}_i$ is a subinterval in $C_n\setminus C_n(k_n - 1)$. 
We have 
\begin{align}
\int_{F\cap \hat{J}} \sum_{i=0}^{H_{n}-1} I_F(T^i x) d\mu &\leq 
\int_{\hat{J}} \sum_{i=0}^{H_{n}-1} I_F(T^i x) d\mu \\ 
&= \sum_{j=0}^{p-1} \sum_{i=0}^{H_{n} - 1} \int_{T^{-i}\hat{J}_j} I_F(x) d\mu \\ 
&\leq \sum_{j=0}^{p-1} \mu (F) = p \mu(F) . 
\end{align} 
Since $\mu (F\setminus \hat{J}) < \delta \leq \delta_0$, then 
\[
\int_{F} \sum_{i=0}^{H_{n}-1} I_F(T^i x) d\mu \leq 2p \mu(F) . 
\]
Therefore, 
\begin{align*}
\frac{ ( \int_F \sum_{i=0}^{H_{n}-1} I_F(T^i x) d\mu )^{\beta} }
{ \int_F (\sum_{i=0}^{H_{n}-1} I_F(T^i x))^{\beta} d\mu } 
&< \frac{ 2^{\beta} p^{\beta} \mu(F)^{\beta} }
{ \frac{(1 - \delta)^2}{(n + 1)} (\frac{ p \lfloor n^{\alpha} \rfloor }{2})^{\beta} 
( 1 - 5\delta - \frac{2 \lfloor n^{\alpha} \rfloor}{m_{n}} ) \mu(F) } \\ 
&= \frac{ 2^{2\beta} \mu(F)^{\beta - 1} }
{ \frac{ \lfloor n^{\alpha} \rfloor^{\beta} }{(n + 1)} 
(1 - \delta)^2 ( 1 - 5\delta - \frac{2 \lfloor n^{\alpha} \rfloor}{m_{n}} ) } < \delta . 
\end{align*}
Since $\delta > 0$ may be chosen arbitrarily small, this contradicts 
the assumption that $T$ is $\beta$-rationally ergodic and completes 
the proof of our theorem. 
\end{proof}

%
%
%
%
%
%
%
%




\normalsize

\baselineskip=17pt

\bibliographystyle{amsalpha}
\bibliography{ErgodicBibMaster}

\end{document}